\documentclass[oneside,reqno]{amsart}
\usepackage[a4paper]{geometry}
\usepackage{mathrsfs}
\usepackage{amsmath,amssymb}\allowdisplaybreaks
\usepackage{mathdots}
\usepackage{mathtools}
\usepackage{bm}
\usepackage{graphicx} 
\usepackage{xcolor}
\usepackage{subfigure}
\usepackage{hyperref}
\hypersetup{
	colorlinks=true,
	citecolor=blue,
    linkcolor=blue,
    filecolor=magenta,      
    urlcolor=cyan,
}

\newtheorem{theorem}{Theorem}
\newtheorem{lemma}[theorem]{Lemma}
\newtheorem*{theorem1}{Theorem \ref{thm:1}}
\newtheorem*{theorem2}{Theorem \ref{thm:2}}
\newtheorem{proposition}[theorem]{Proposition}
\newtheorem{corollary}[theorem]{Corollary}
\newtheorem{conjecture}[theorem]{Conjecture}


\title{Self-similarity of $\mathcal{P}$-positions of $(2n+1)$-dimensional Wythoff's game}

\author{Yanxi Li}
\address[Y.-X. Li]{School of Mathematics, South China University of Technology, Guangzhou 510640, China}
\email{ma\_lyx@mail.scut.edu.cn}
\author{Wen Wu$^*$}\thanks{$^*$ Wen Wu is the corresponding author.}
\address[W. Wu]{School of Mathematics, South China University of Technology, Guangzhou 510640, China}
\email[corresponding author]{wuwen@scut.edu.cn}

\begin{document}
\begin{abstract}
	Wythoff's game as a classic combinatorial game has been well studied. In this paper, we focus on $(2n+1)$-dimensional Wythoff's game; that is the Wythoff's game with $(2n+1)$ heaps. We characterize their $\mathcal{P}$-positions explicitly and show that they have self-similar structures. In particular, the set of all $\mathcal{P}$-positions of $3$-dimensional Wythoff's game generates the well-known fractal set---the Sierpinski sponge. 
\end{abstract}
\keywords{Wythoff's game; Sierpinski sponge}
\subjclass[2000]{91A05, 91A46, 28A80}
\maketitle

\section{Introduction}
The classic Wythoff's game is a two-player combinatorial game proposed by Wythoff \cite{Wythoff1907} in 1907. It is a variation of Nim's game, consisting of two players and two heaps of tokens. Two players take turns moving the tokens in one of two following ways: (i) taking a positive number of tokens from one heap; (ii) taking the same positive number of tokens from both heaps. 
The game ends when there are no tokens to remove. The player who makes the last move wins.

A game position is a tuple $(x_1,x_2)$ whose components are the numbers of tokens of two heaps. A position is called an $\mathcal{N}$-\emph{position} if the first player (i.e., the player about to move from there) has a winning strategy. A position is called a $\mathcal{P}$-\emph{position} if no matter what move the first player takes, the second player (i.e., the player who will play on the next round) always has a winning strategy. In the classic Wythoff's game, for instance, we can easily find that $(2,2)$ is an $\mathcal{N}$-position and $(1,2)$ is a $\mathcal{P}$-position. In general, a game position in the classic Wythoff's game is either an $\mathcal{N}$-position or a $\mathcal{P}$-position; see \cite{BCG01}. In order to win the game, the player should try to make sure the position after moving is a $\mathcal{P}$-position. Thus, characterizing $\mathcal{P}$-positions is the most important issue in the study of Wythoff's game.


The study of Wythoff's game can be divided into two categories: restrictions and extensions. In the restrictions of Wythoff's game, new rules are added to the original one and the players have fewer choices of movements. Ho \cite{HoTwo} added the rule that removing tokens from the smaller heap is not allowed if the two entries are not equal.  Duch\^ene and Gravier  \cite{Duch2009Geometrical} introduced the restriction that one cannot remove more than $R$ tokens from a single heap; meanwhile, Liu, Li and Li \cite{Liu2011A} allowed removing the same (arbitrarily large) number of tokens from both heaps. 
In \cite{Duch2009Geometrical}, Duch\^ene and Gravier also proposed the $(a,a)$ game; that is, a player may remove as many tokens from one heap or the other, or $k$ tokens from each, where $k$ is a positive multiple of $a$.
Aggarwal et al. \cite{AGSY16} studied the algorithm for $(2^b,2^b)$ game. Duch\^ene et al. \cite{DFNR10} investigated extensions and restrictions of Wythoff's game having exactly the same set of $\mathcal{P}$-positions as the original game.

The extensions of Wythoff's game also contain two parts: extensions of the moving methods, and extensions of the number of heaps. Fraenkel  \cite{Fraenkel1982How,Fraenkel1983Wythoff} studied the $a$-Wythoff's game by reducing the constraint on moving tokens, which was solved in both normal play and mis\`ere play. Gurvich \cite{Gurvich2012Further} investigated a more general case, called  the $WYT(a,b)$ game. In terms of the quantity of heaps, Fraenkel and Zusman \cite{Aviezri2001A} and Fraenkel  \cite{Fraenkel2004Complexity} proposed two different kinds of rules with $n$ heaps of tokens, and Duch\^ene  and Gravier \cite{Duch2009Geometrical} proposed the $n$ vectors game which further generalizes Wythoff's game.

In this paper, we focus on the Wythoff's game consisting of two players and $n$ heaps of tokens, called the \emph{$n$-dimensional Wythoff's game}. The game position is denoted by the $n$-tuple $(x_1,x_2,\dots,x_n)$ where the components are the number of tokens in the heaps. Let $V\subset\mathbb{N}^{n}\backslash\{\mathbf{0}\}$ be the set of \emph{move vectors}. A legal move $m$ is of the form $kv$ where $v\in V$ and $k\ge 1$ is an integer. 
Two players take turns to move. Namely, select a legal move $m=(m_1,m_2,\dots,m_n)$ and remove $m_i$ tokens from the $i$-th heap for all $i$. The player in turn who does not have any legal move loses the game. Note that in this case, the game may end although the heaps are not empty. The classic Wythoff's game is a $2$-dimensional Wythoff's game with move vectors $\{(1,0),(0,1),(1,1)\}$. The $\mathcal{P}$-position of the classic Wythoff's game can be described algebraically using complementary Beatty sequences as in Theorem \ref{thm:0} below. For the study of finding variations of Wythoff's game with prescribed complementary (Beatty) sequences as the set of $\mathcal{P}$-positions, one can see \cite{CDR16, DR10, LHF11} and reference therein. For more information on Beatty sequence, one can see for example \cite{AD19}.

\begin{theorem}[Wythoff \cite{Wythoff1907}]\label{thm:0}
	A position $(x_1,x_2)$ in the classic Wythoff's game is a $\mathcal{P}$-position if and only if $(x_1,x_2)$ is of the form \[(\lfloor k\phi \rfloor, \lfloor k\phi^2 \rfloor)\quad\text{or}\quad (\lfloor k\phi^2 \rfloor, \lfloor k\phi \rfloor)\] for some integer $k\ge 0$, where $\phi = (\sqrt{5}+1)/2$ is the golden ratio, and $\lfloor x \rfloor$ denotes the largest integer that is not greater than $x$. 
\end{theorem}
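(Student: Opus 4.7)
The plan is to prove the characterization by identifying the candidate set
\[
S = \bigl\{(\lfloor k\phi\rfloor, \lfloor k\phi^2\rfloor),\, (\lfloor k\phi^2\rfloor, \lfloor k\phi\rfloor) : k \geq 0\bigr\}
\]
and showing it coincides with the set of $\mathcal{P}$-positions. By the standard recursive characterization of $\mathcal{P}$-positions, it suffices to verify two assertions: (C) from every position in $S$ other than $(0,0)$, every legal move exits $S$; and (R) from every position outside $S$, at least one legal move lands in $S$. Since $(0,0)\in S$ (take $k=0$) is the terminal position, (C) and (R) together imply by induction on $x_1+x_2$ that $S$ is exactly the set of $\mathcal{P}$-positions.

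Writing $A_k=\lfloor k\phi\rfloor$ and $B_k=\lfloor k\phi^2\rfloor$, I would first assemble two arithmetic ingredients. Since $\phi$ and $\phi^2$ are irrational with $1/\phi+1/\phi^2=1$, Beatty's theorem gives that $(A_k)_{k\geq 1}$ and $(B_k)_{k\geq 1}$ are complementary sequences that partition the positive integers. The identity $\phi^2=\phi+1$ yields $B_k = A_k + k$, so the map $k\mapsto B_k-A_k$ is a bijection $\mathbb{N}\to\mathbb{N}$, and each pair $(A_k,B_k)$ is uniquely determined by its smaller coordinate, by its larger coordinate, or by its gap. Both sequences $(A_k)$ and $(B_k)$ are strictly increasing in $k$.

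For (C), let $(A_k,B_k)\in S$ with $k\geq 1$; the symmetric case is analogous. A type (i) move changes exactly one coordinate, producing a pair that either retains $A_k$ but changes the gap, or retains $B_k$ but changes the other coordinate; in either case uniqueness (given an $A$-value or $B$-value) together with complementarity of $\{A_k\}$ and $\{B_k\}$ rules out membership in $S$. A type (ii) move produces $(A_k-m,B_k-m)$ with $m\geq 1$, which preserves the gap $k$, so landing in $S$ would force $A_k-m=A_k$ by uniqueness of the pair with gap $k$, a contradiction. For (R), assume $x_1\leq x_2$ and set $d=x_2-x_1$; the subcase $x_1=A_d$ is impossible because then $x_2=A_d+d=B_d$ would place $(x_1,x_2)$ in $S$. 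The remaining possibilities, which are exhaustive by Beatty, are: (a) $x_1=A_k$ with $k>d$, where $x_1>A_d$ and a type (ii) move by $x_1-A_d$ reaches $(A_d,B_d)$; (b) $x_1=A_k$ with $k<d$, where $B_k=x_1+k<x_1+d=x_2$ and a type (i) move on heap 2 by $x_2-B_k$ reaches $(A_k,B_k)$; (c) $x_1=B_j$ for some $j\geq 1$, where $x_2\geq x_1=B_j>A_j$ and a type (i) move on heap 2 by $x_2-A_j$ reaches $(B_j,A_j)$.

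The main obstacle I anticipate is the bookkeeping in (R): one must check that the three cases are genuinely exhaustive, that each prescribed move amount is positive, and that the boundary case $d=0$ (i.e.\ $x_1=x_2\geq 1$) is handled correctly by cases (a) or (c). All of these reductions rest on the strict monotonicity of $A_k$ and $B_k$ and on complementarity of the two Beatty sequences, so the heart of the proof is really the arithmetic of $\phi$ captured by $1/\phi+1/\phi^2=1$ and $\phi^2=\phi+1$.
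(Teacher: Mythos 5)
The paper does not prove this statement: it is quoted as background and attributed to Wythoff \cite{Wythoff1907}, so there is no in-paper argument to compare against. Your proposal is the standard proof via Beatty's theorem and the recursive characterization of $\mathcal{P}$-positions (the same criterion the paper records as Lemma \ref{2.1.4}), and it is correct: the two ingredients $1/\phi+1/\phi^2=1$ (complementarity) and $\phi^2=\phi+1$ (hence $B_k=A_k+k$, so the gaps exhaust $\mathbb{N}$ bijectively) do carry both the closure step (C) and the reachability step (R), and your three cases in (R) are exhaustive once one notes that $x_1\geq 1$ lies in exactly one of the two Beatty sequences and that $x_1=0$ falls under $x_1=A_0$ with $k=0<d$. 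The only points worth writing out in a full version are the ones you already flag: positivity of each move amount, and the fact that $A_k=B_j$ is impossible for $k,j\geq 1$, which is what kills the ``reversed orientation'' possibilities in (C).
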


A natural extension of the classic Wythoff's game is the $n$-dimensional Wythoff's game with move vectors $\{(1,0,\dots,0), (0,1,0,\dots,0),\dots,(0,\dots,0,1),(1,1,\dots,1)\}$.
Aggarwal, Geller, Sadhuka and Yu \cite{AGSY16} conjectured that the set of $\mathcal{P}$-positions of $3$-dimensional Wythoff's game is related to the classic fractal---Sierpinski sponge, i.e. the compact set $K\subset \mathbb{R}^3$ satisfying 
\[K=\frac{1}{2}K\cup\frac{1}{2}(K+(1,0,0))\cup\frac{1}{2}(K+(0,1,0))\cup\frac{1}{2}(K+(0,0,1))\]
where $a(A+u):=\{a(x+v)\,:\ x\in A\}$ for $A\subset\mathbb{R}^3$, $u\in\mathbb{R}^3$ and $a\in\mathbb{R}$. For more information on fractal sets, see \cite{Fal14, Ma08}.
\begin{conjecture}[Siperpinski sponge conjecture \cite{AGSY16}]\label{conj:1}
	The set of $\mathcal{P}$-positions of $3$-dimensional Wythoff's game with move vectors $\{(1, 0, 0), (0, 1, 0), (0, 0, 1), (1, 1, 1)\}$ generates the Sierpinski sponge.
\end{conjecture}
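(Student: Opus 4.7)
The plan is first to give an explicit algebraic description of the $\mathcal{P}$-positions, and then to read the self-similar structure off from it. Hand-computing small cases (for instance, $(0,0,0)$, $(1,1,0)$, $(2,2,0)$, $(3,3,0)$, $(3,2,1)$ and $(5,4,1)$ are $\mathcal{P}$-positions while $(1,0,0)$, $(2,1,1)$ and $(2,2,2)$ are not) strongly suggests that the diagonal move $(1,1,1)$ is inert, and
\[
\mathcal{P} \;=\; \{(x_1,x_2,x_3)\in\mathbb{N}^3 \,:\, x_1\oplus x_2\oplus x_3 = 0\},
\]
where $\oplus$ denotes bit-wise XOR. The content of the conjecture is therefore that the $\mathcal{P}$-positions of this game coincide with those of ordinary three-pile Nim. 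Proving this reduces to two standard assertions: (a) from any position with nonzero XOR, some $e_i$-move reaches XOR $= 0$ (the classical Nim strategy); and (b) every legal move from a position with $x_1\oplus x_2\oplus x_3 = 0$ lands at a position with nonzero XOR.

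Part (a) is standard Nim. Among the moves required for (b), only the diagonal move $x\mapsto x-k(1,1,1)$ with $k\geq 1$ is nontrivial, and this is the crux. Let $v := \nu_2(k)$ be the $2$-adic valuation. Since the low $v$ bits of $k$ vanish, binary subtraction $x_i\mapsto x_i-k$ produces no borrow into bit $v$; and bit $v$ of $k$ equals $1$, so bit $v$ of $x_i-k$ equals the complement of bit $v$ of $x_i$. Consequently
\[
\mathrm{bit}_v\!\Bigl(\bigoplus_{i=1}^{3}(x_i-k)\Bigr) \;=\; \bigoplus_{i=1}^{3}\overline{\mathrm{bit}_v(x_i)} \;=\; 1 \oplus \mathrm{bit}_v(x_1\oplus x_2\oplus x_3) \;=\; 1,
\]
using $1\oplus 1\oplus 1 = 1$ and $x_1\oplus x_2\oplus x_3 = 0$; the new XOR is therefore nonzero, finishing (b).

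Once $\mathcal{P}$ is so characterised, writing $x_i = 2y_i + \varepsilon_i$ with $\varepsilon_i\in\{0,1\}$ turns the XOR condition into the disjoint-union self-similarity
\[
\mathcal{P} \;=\; \bigsqcup_{v\in\Sigma}\bigl(2\mathcal{P}+v\bigr), \qquad \Sigma \;:=\; \{(0,0,0),(1,1,0),(1,0,1),(0,1,1)\}.
\]
Rescaling by $2^{-n}$ and passing to the Hausdorff limit in $[0,1]^3$ presents the limit set as the unique nonempty compact attractor of the IFS $\{x \mapsto (x+v)/2 : v \in \Sigma\}$. To match this attractor with the Sierpinski sponge $K$ from the statement, one conjugates by the linear map $L : e_i \mapsto (1,1,1)-e_i$, which bijects $\{0,e_1,e_2,e_3\}$ onto $\Sigma$; $L$ carries the IFS defining $K$ to the IFS whose attractor is the scaling limit of $\mathcal{P}$, so the two attractors are identified under this natural bijection of their fixed-point tetrahedra.

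The main obstacle is the bit-level analysis in (b): tracking the binary subtraction at position $\nu_2(k)$ is what shows that subtracting $(k,k,k)$ from a Nim $\mathcal{P}$-position can never preserve the Nim parity, and this is precisely what makes the $\mathcal{P}$-set of this game coincide with that of three-pile Nim. Without it the clean IFS above would not arise, and the connection to $K$ would collapse.
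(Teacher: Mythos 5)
Your proposal follows essentially the same route as the paper: the same XOR characterization of the $\mathcal{P}$-positions (the paper's Theorem 1), proved by the same two-part verification with the identical key step --- at bit $\nu_2(k)$ the diagonal move by $(k,k,k)$ flips that bit of every coordinate, and the oddness of the number of heaps forces the Nim-sum at that bit to become $1$ --- followed by the same digit-set decomposition over $T^{(3)}=\{(0,0,0),(1,1,0),(1,0,1),(0,1,1)\}$ and Hutchinson's uniqueness theorem. The only cosmetic differences are that the paper peels off the \emph{most} significant bit of the truncated sets $P_m^{(3)}$ (which is what directly yields the contracting IFS relation after rescaling by $2^{-m}$, whereas your least-significant-bit identity $\mathcal{P}=\bigsqcup_{v}(2\mathcal{P}+v)$ needs the symmetric top-digit observation to conclude), and that you make explicit the linear conjugation between the digit sets $\Sigma$ and $\{0,e_1,e_2,e_3\}$, a point the paper's corollary leaves implicit.
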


We solve conjecture \ref{conj:1} by giving the following more general result. 
\begin{theorem}\label{thm:1}
	Let $n\ge 3$ be an odd number. The set
	\begin{equation*}
	P^{(n)}:=\{(x_1,x_2,\dots,x_n )\in \mathbb{N}^n:x_1 \oplus x_2 \oplus \dots \oplus x_n=0\}
	\end{equation*}
	is all the P-positions of $n$-dimensional Wythoff's game with move vectors  \[V=\{(1,0,\dots,0),(0,1,\dots,0),\dots,(0,0,\dots,1),(1,1,\dots,1)\} ,\]
	where $\oplus$ denotes the bitwise exclusive OR (i.e. the Nim-sum).
\end{theorem}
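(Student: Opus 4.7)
The plan is to establish Theorem \ref{thm:1} through the standard three-part characterization of $\mathcal{P}$-positions for an impartial game: (i) the terminal position $(0,\ldots,0)$ lies in $P^{(n)}$; (ii) no legal move from a position in $P^{(n)}$ stays in $P^{(n)}$; and (iii) from every position outside $P^{(n)}$ some legal move reaches $P^{(n)}$. Item (i) is immediate.

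For item (iii), I would invoke the classical Nim argument, which does not even use the all-ones move. Given $(x_1,\ldots,x_n)$ with $s := x_1 \oplus \cdots \oplus x_n \neq 0$, let $2^b$ be the highest bit of $s$ and pick $i$ with $\mathrm{bit}_b(x_i) = 1$; then $x_i' := x_i \oplus s$ satisfies $x_i' < x_i$, so removing $x_i - x_i'$ tokens from heap $i$ is a legal single-heap move whose new Nim-sum is $s \oplus x_i \oplus x_i' = 0$. The single-heap case of (ii) is equally easy, since altering exactly one coordinate of a Nim-sum-zero tuple necessarily destroys that property.

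The real content is the all-ones case of (ii): if $x_1 \oplus \cdots \oplus x_n = 0$ and $1 \leq k \leq \min_i x_i$, then $(x_1 - k) \oplus \cdots \oplus (x_n - k) \neq 0$. I would analyse this via binary carry propagation. Setting $y_i := x_i - k$ and expanding $x_i = y_i + k$ bit by bit, one has at every position $j$
\[
\mathrm{bit}_j(x_i) = \mathrm{bit}_j(y_i) \oplus \mathrm{bit}_j(k) \oplus c_i^{(j)},
\]
where $c_i^{(j)}$ is the carry into bit $j$ of the sum $y_i + k$ (so $c_i^{(0)} = 0$) and $c_i^{(j+1)}$ equals the majority of $\mathrm{bit}_j(y_i)$, $\mathrm{bit}_j(k)$, $c_i^{(j)}$. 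Suppose toward contradiction that both Nim-sums vanish. XORing the above identity over $i = 1, \ldots, n$ and using that $n$ is odd (so $n$ copies of $\mathrm{bit}_j(k)$ XOR to $\mathrm{bit}_j(k)$ itself) yields $\mathrm{bit}_j(k) = \bigoplus_i c_i^{(j)}$ for every $j$. A straightforward induction on $j$ now forces all bits of $k$ to vanish: starting from $c_i^{(0)} = 0$, whenever all $c_i^{(j)} = 0$ we get $\mathrm{bit}_j(k) = 0$, and then the majority rule gives $c_i^{(j+1)} = \mathrm{maj}(\mathrm{bit}_j(y_i), 0, 0) = 0$. Hence $k = 0$, contradicting $k \geq 1$.

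The main obstacle is precisely this bitwise carry argument, where the oddness of $n$ enters essentially: for even $n$ the $\mathrm{bit}_j(k)$ contribution would cancel under XOR and the induction would collapse. The other steps just reproduce the textbook Nim template.
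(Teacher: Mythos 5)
Your proposal is correct, and it follows the same overall template as the paper (terminal position in $P^{(n)}$; no move from $P^{(n)}$ stays in $P^{(n)}$; every position outside $P^{(n)}$ has a move into $P^{(n)}$, the latter two being exactly conditions (i) and (ii) of the paper's Lemma \ref{2.1.4}). The single-heap cases and the reachability argument are the textbook Nim steps in both treatments. The one place where the two proofs genuinely diverge is the crucial all-ones case, and there your mechanism is different from the paper's. The paper argues directly: it takes $\tau$ to be the lowest set bit of $t$, observes that since $t\equiv 2^\tau \pmod{2^{\tau+1}}$ the bit $f_\tau$ flips in \emph{every} coordinate under $x_i\mapsto x_i-t$, and then XORs these $n$ flips (with $n$ odd) to conclude that bit $\tau$ of the new Nim-sum equals $1$. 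This is constructive in the sense that it names the exact bit position witnessing that the resulting position is not in $P^{(n)}$. You instead argue by contradiction through the full-adder recurrence: writing $x_i=y_i+k$ with sum bit $\mathrm{bit}_j(y_i)\oplus\mathrm{bit}_j(k)\oplus c_i^{(j)}$ and carry $c_i^{(j+1)}=\mathrm{maj}(\mathrm{bit}_j(y_i),\mathrm{bit}_j(k),c_i^{(j)})$, XORing over the odd number of coordinates to get $\mathrm{bit}_j(k)=\bigoplus_i c_i^{(j)}$, and inducting upward from $c_i^{(0)}=0$ to force $k=0$. Both arguments are sound and both isolate the oddness of $n$ as the essential hypothesis; your induction in effect propagates zero carries up to the lowest set bit of $k$ and finds the contradiction there, which is the same bit the paper inspects directly. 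The paper's version is shorter and pinpoints the witnessing bit; yours makes the role of carry propagation explicit and generalizes naturally to settings where one wants to track how subtraction interacts with the Nim-sum bit by bit.
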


Theorem \ref{thm:1} explicitly describes the $\mathcal{P}$-positions of $n$-dimensional Wythoff's game with move vectors $V$. When $n=3$, we see $V = \{(1,0,0),(0,1,0),(0,0,1),(1,1,1)\}$. In this case $P^{(3)}$ is the set of all $\mathcal{P}$-positions (in $\mathbb{R}^3$) which is unbounded. To visualize $P^{(n)}$, we introduce its bounded version. For any $m\ge 0$, write 
\[P_m^{(n)}=\left\{(x_1,x_2,\dots,x_n )\in \mathbb{N}^n : x_i<2^m \text{ for }1\leq i\leq n \text{ and }x_1\oplus x_2\oplus\dots\oplus x_n =0 \right\}.\]
Fig.~\ref{fig.1} illustrates $P^{(3)}_6$ in two different angles. It is clear that \[P_m^{(n)}\subset P_{m+1}^{(n)}\subset P^{(n)}\quad \text{and}\quad P^{(n)}=\lim\limits_{m\to\infty}P_m^{(n)}.\]

As shown in Fig.~\ref{fig.1}, there is a remarkable resemblance between $P^{(3)}_6$ and the Sierpinski sponge. Although $P^{(3)}$ shares a similar self-similarity as the Sierpinski sponge, the set $P^{(3)}$, which is discrete and unbounded, is essentially different from the Sierpinski sponge. So the next issue will be finding the relationship between them. 
Since $P_m^{(n)}$ is unbounded (in $\mathbb{R}^{n}$), we consider its scaled copy $P_m^{(n)}/2^m$ which is increasing as $m$ increases (see Section 3). In such a way, the set $\mathcal{P}^{(n)}:=\lim_{m\to\infty}P^{(n)}_m/2^{m}$ is a bounded version of $P^{(n)}$. By using the nested structure of $P_m^{(n)}$ (see Lemma \ref{3.3.3n} in section 3), we obtain the nested structure of $\mathcal{P}^{(n)}$ in the following result.  

\begin{figure}[htbp]
	\centering
	\subfigure[Angle 1]{
		\includegraphics[width=0.49\textwidth]{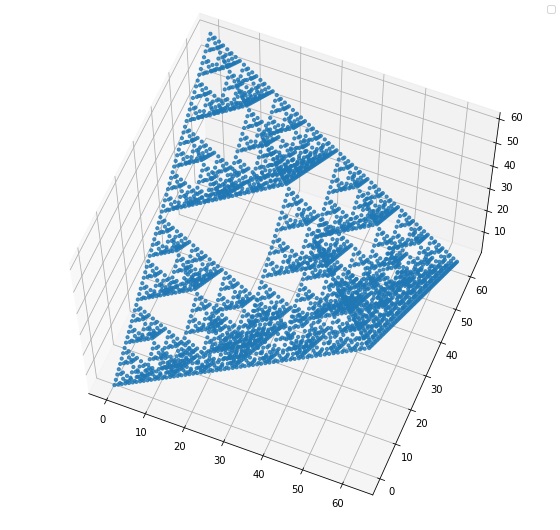} 
	}
	\subfigure[Angle 2]{
		\includegraphics[width=0.46\textwidth]{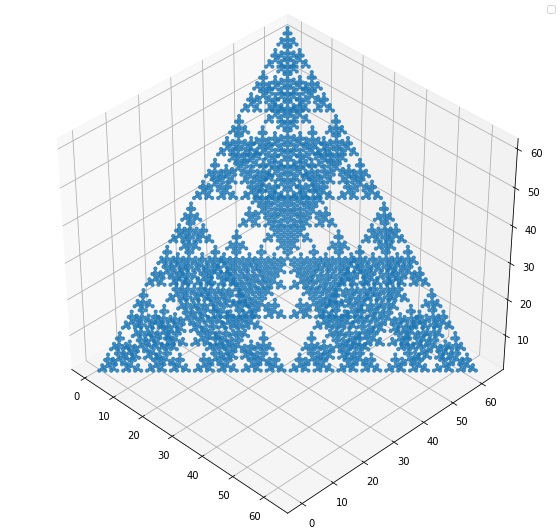} 
	}
	\caption{$\mathcal{P}$-positions of 3-dimensional Wythoff's game ($x_i<64$)}
	\label{fig.1}
\end{figure}

\begin{theorem}\label{thm:2}
	For every odd number $n\ge 3$, 
	\[\mathcal{P}^{(n)}=\bigcup_{v\in T^{(n)}}\frac{1}{2}\left(v+\mathcal{P}^{(n)}\right)\]
	where $T^{(n)}=\{(x_1,x_2,\dots,x_n)\in \{0,1\}^n :x_1\oplus x_2\oplus\dots\oplus x_n =0 \}$.
\end{theorem}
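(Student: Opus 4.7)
The plan is to deduce the self-similarity of $\mathcal{P}^{(n)}$ from a discrete self-similar identity at each finite level $m$ (the content of Lemma \ref{3.3.3n}) and then pass to the limit. Throughout, the algebraic backbone is that $\oplus$ is a bitwise operation: splitting any integer $x<2^{m+1}$ as $x=2^{m}b+y$ with $b\in\{0,1\}$ and $0\le y<2^{m}$ separates the top-bit and lower-bit contributions to a Nim-sum.

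First I would establish the finite-level identity
\[P_{m+1}^{(n)}=\bigcup_{v\in T^{(n)}}\bigl(2^{m}v+P_{m}^{(n)}\bigr),\]
which is the nested structure promised in Lemma \ref{3.3.3n}. Writing each $x_i\in\{0,1,\dots,2^{m+1}-1\}$ as $x_i=2^{m}b_i+y_i$ with $b_i\in\{0,1\}$ and $0\le y_i<2^{m}$, the condition $x_1\oplus\cdots\oplus x_n=0$ decouples into the two independent conditions $b_1\oplus\cdots\oplus b_n=0$ (equivalent to $(b_1,\dots,b_n)\in T^{(n)}$) and $y_1\oplus\cdots\oplus y_n=0$ (equivalent to $(y_1,\dots,y_n)\in P_m^{(n)}$), yielding the claimed decomposition.

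Next I would scale by $2^{-(m+1)}$ to rewrite the identity as
\[\frac{P_{m+1}^{(n)}}{2^{m+1}}=\bigcup_{v\in T^{(n)}}\frac{1}{2}\!\left(v+\frac{P_{m}^{(n)}}{2^{m}}\right).\]
A preliminary observation is that $\{P_m^{(n)}/2^m\}_{m\ge 0}$ is an increasing family: doubling every coordinate of a point in $P_m^{(n)}$ produces a point in $P_{m+1}^{(n)}$ with identical scaled coordinates, because multiplication by $2$ commutes with $\oplus$. Passing to the limit $m\to\infty$ in the scaled identity then gives
\[\mathcal{P}^{(n)}=\bigcup_{v\in T^{(n)}}\frac{1}{2}\bigl(v+\mathcal{P}^{(n)}\bigr).\]
Here I would use that the union over $v\in T^{(n)}$ is finite and that each affine map $x\mapsto\tfrac{1}{2}(v+x)$ is a homeomorphism of $\mathbb{R}^{n}$, so that both finite union and each affine factor commute with the monotone limit (closure of the increasing union).

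The main conceptual input is the bit-splitting argument establishing the finite-level decomposition; once it is in hand, the limit step is topologically routine precisely because $T^{(n)}$ is finite. The only point demanding care is to fix the exact meaning of $\mathcal{P}^{(n)}=\lim_{m\to\infty}P_m^{(n)}/2^m$ (closure of an increasing union, equivalently a Hausdorff limit of the compact truncations), but either reading makes the interchange of limit and finite union legitimate, so no genuine obstacle is expected.
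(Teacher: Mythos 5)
Your proposal is correct and follows essentially the same route as the paper: prove the discrete nesting identity $P_{m+1}^{(n)}=\bigcup_{v\in T^{(n)}}(2^{m}v+P_{m}^{(n)})$ by splitting off the top bit, rescale, check that $(1/2^m)P_m^{(n)}$ is increasing, and take the union over $m$. The only remark worth making is that the paper defines $\mathcal{P}^{(n)}$ as the plain increasing union (not its closure --- the closure is treated separately in Proposition \ref{3.3.7n}), so the ``limit step'' is pure set algebra (interchanging two unions) and none of the topological machinery you invoke is actually needed, though it would also be harmless under the closure reading since $T^{(n)}$ is finite.
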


Note that the mapping $g_{v}(x)=\frac{1}{2}(x+v)$ is a contraction on $\mathbb{R}^n$ for all $v\in T^{(n)}$. Hutchinson \cite{Hut81} showed that there is a unique closed bounded set $E$ satisfying $E=\bigcup_{v\in T^{(n)}}g_v(E)$. Combining this fact and Theorem \ref{thm:2}, we have the following result which answers Conjecture \ref{conj:1}.
\begin{corollary}
	The closure of $\mathcal{P}^{(3)}$ is the Sierpinski sponge $K$.
\end{corollary}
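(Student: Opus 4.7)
The plan is to deduce the corollary as a direct consequence of Theorem \ref{thm:2} together with the uniqueness part of Hutchinson's theorem, already invoked by the authors in the paragraph just before the corollary. The key point is that the self-similar identity enjoyed by $\mathcal{P}^{(n)}$ survives passage to the closure, which then identifies $\overline{\mathcal{P}^{(3)}}$ as an attractor of the IFS defining $K$.

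First, I would specialize Theorem \ref{thm:2} to $n=3$, giving
\[
\mathcal{P}^{(3)}=\bigcup_{v\in T^{(3)}} g_v\!\left(\mathcal{P}^{(3)}\right),
\qquad g_v(x)=\tfrac{1}{2}(x+v).
\]
Since each $g_v$ is continuous and the union is finite, taking closures commutes with both operations (continuity of $g_v$ gives $\overline{g_v(A)}=g_v(\overline{A})$ on bounded sets, and closure distributes over finite unions). Hence
\[
\overline{\mathcal{P}^{(3)}}=\bigcup_{v\in T^{(3)}} g_v\!\left(\overline{\mathcal{P}^{(3)}}\right).
\]

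Second, I would check that $\overline{\mathcal{P}^{(3)}}$ is a non-empty compact subset of $\mathbb{R}^{3}$. Non-emptiness is immediate because $\mathbf{0}\in\mathcal{P}^{(3)}$. Boundedness follows from $P_m^{(3)}/2^m\subset[0,1)^3$ for every $m\ge 0$, so that $\mathcal{P}^{(3)}\subset[0,1]^3$ and its closure is compact. Together with the displayed identity, this exhibits $\overline{\mathcal{P}^{(3)}}$ as a non-empty compact invariant set of the IFS $\{g_v:v\in T^{(3)}\}$, whose contractions all have ratio $1/2$.

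Finally, I would apply Hutchinson's theorem to conclude: the IFS $\{g_v:v\in T^{(3)}\}$ admits a unique non-empty compact invariant set, and by the definition of the Sierpinski sponge recalled in Section 1 this set is $K$; therefore $\overline{\mathcal{P}^{(3)}}=K$. I do not expect any substantial obstacle here, since the heavy lifting lives in Theorems \ref{thm:1} and \ref{thm:2}. The only detail I would verify carefully is that the IFS producing $K$ and the IFS indexed by $T^{(3)}=\{(0,0,0),(1,1,0),(1,0,1),(0,1,1)\}$ are indeed the same Hutchinson operator, so that the uniqueness statement applies to a common fixed-point equation on both sides of the asserted equality.
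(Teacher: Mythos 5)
Your route is exactly the paper's: the authors also derive the corollary by combining Theorem \ref{thm:2} with Hutchinson's uniqueness theorem (they note after Theorem \ref{thm:2} that $\overline{\mathcal{P}^{(n)}}$ is compact and is therefore the attractor of $\{g_v\}_{v\in T^{(n)}}$), and your closure-passing step is correct since each $g_v$ is an affine homeomorphism, so $g_v(\overline{A})=\overline{g_v(A)}$ and closure commutes with finite unions. The one detail you defer, however, is precisely where the argument is delicate: the check does \emph{not} go through for the formula defining $K$ in the introduction. There $K$ is the attractor of the IFS with translation set $\{(0,0,0),(1,0,0),(0,1,0),(0,0,1)\}$, whereas $T^{(3)}=\{(0,0,0),(1,1,0),(1,0,1),(0,1,1)\}$; these are different Hutchinson operators, and their attractors are genuinely different subsets of $[0,1]^3$ (their convex hulls are the non-congruent tetrahedra spanned by the respective translation vectors, so the attractors are affinely but not isometrically equivalent). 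Uniqueness therefore only identifies $\overline{\mathcal{P}^{(3)}}$ with the attractor of the $T^{(3)}$-IFS, which is the standard embedding of the Sierpinski tetrahedron in the cube; to get the corollary as literally stated one must read the introduction's displayed equation for $K$ as using the digit set $T^{(3)}$ (most likely a slip in the paper rather than in your argument). Aside from flagging that mismatch explicitly, your proof is complete and matches the paper's.
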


The paper is organized as follows. In Section 2, we characterize the $\mathcal{P}$-positions of $n$-dimensional Wythoff's game with move vectors $V$ for all odd numbers $n$. Then we prove Theorem \ref{thm:1}. In Section 3, we study the geometric property of $P^{(n)}$ and prove Theorem \ref{thm:2}.

\section{\texorpdfstring{$\mathcal{P}$}{P}-positions of \texorpdfstring{$n$}{n}-dimensional Wythoff's game}

In this section, we give a quite simple description for $\mathcal{P}$-positions in $n$-dimensional Wythoff's game with move vectors $V=\{(1,0,\dots,0),(0,1,\dots,0),\dots,(0,0,\dots,1),(1,1,\dots,1)\}$. 

The following lemma, which is a consequence of \cite[Proposition 1]{DFNR10}, gives a criterion telling when the set of positions is the set of all $\mathcal{P}$-positions of a game.
\begin{lemma}[see {\cite[Proposition 1]{DFNR10}}]\label{2.1.4}
	Suppose that $S$ is the set of all possible positions of $n$-dimensional Wythoff's game with move vectors $V'$, and $P$ is a subset of $S$. Then $P$ is the set of all $\mathcal{P}$-positions if $P$ satisfies: 
	\begin{enumerate}
		\item[(i)] for all $x\in P$, $x-kv\in S\backslash P$ for all $k\ge 1$ and $v\in V'$ providing $x-kv\in\mathbb{N}^n$; 
		\item[(ii)] for all $x\in S\backslash P$, there exist $k\ge 1$ and $v\in V'$ such that $x-kv\in P$.
	\end{enumerate} 
\end{lemma}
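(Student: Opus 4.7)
The plan is to verify that $P$ satisfies the recursive characterisation of $\mathcal{P}$-positions in a well-founded combinatorial game, by strong induction on the total number of tokens $|x|=x_1+\cdots+x_n$. Since every legal move has the form $x\mapsto x-kv$ with $k\geq 1$ and $v\in V'\subset \mathbb{N}^n\setminus\{\mathbf{0}\}$, we have $|x-kv|<|x|$, so the game tree is well-founded and $|x|$ is a legitimate induction parameter.

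First I would recall the standard backward-induction definition under the normal play convention: a position $x$ is a $\mathcal{P}$-position precisely when every legal move from $x$ leads to an $\mathcal{N}$-position, with terminal positions (those from which no legal move exists) counted as $\mathcal{P}$-positions. Let $\mathcal{P}^{*}$ denote the set of actual $\mathcal{P}$-positions determined by this recursion; the task is to show $P=\mathcal{P}^{*}$. For the base case, I would observe that every terminal position must already lie in $P$, since otherwise hypothesis (ii) would produce a legal move from that position, contradicting terminality.

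For the inductive step, I would assume the equivalence $y\in P\iff y\in\mathcal{P}^{*}$ for all $y$ with $|y|<|x|$, and then split on whether $x\in P$. If $x\in P$, hypothesis (i) says every position reachable from $x$ by a legal move lies in $S\setminus P$, which by the induction hypothesis coincides with $S\setminus\mathcal{P}^{*}$; hence every move from $x$ lands in an $\mathcal{N}$-position and $x\in\mathcal{P}^{*}$. If instead $x\in S\setminus P$, hypothesis (ii) supplies a legal move from $x$ to some $y\in P\subseteq\mathcal{P}^{*}$, so $x$ is an $\mathcal{N}$-position, i.e.\ $x\notin\mathcal{P}^{*}$. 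Combining the two directions yields $P=\mathcal{P}^{*}$.

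I do not anticipate a genuine obstacle, as this is the classical uniqueness argument for the $(\mathcal{P},\mathcal{N})$-labelling of a finite acyclic game, and conditions (i)--(ii) are precisely the two properties that pin this labelling down. The only delicate point is the well-foundedness of the induction, which is automatic here because every move vector in $V'$ is nonzero and $k\geq 1$ forces a strict decrease of $|x|$.
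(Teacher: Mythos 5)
Your proof is correct. Note that the paper itself supplies no argument for this lemma: it is stated as a direct consequence of \cite[Proposition 1]{DFNR10} and simply cited, so there is no in-paper proof to compare against. Your strong induction on the token sum $x_1+\cdots+x_n$ is the standard uniqueness argument for the $\mathcal{P}$/$\mathcal{N}$-labelling of a well-founded game under the normal play convention, and you correctly isolate the two points that require care: well-foundedness of the induction (automatic since every $v\in V'$ is nonzero and $k\ge 1$, so each move strictly decreases the token sum) and the base case that every terminal position must already lie in $P$, which is forced by condition (ii). Both directions of the inductive step are handled properly, so the argument is complete.
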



\subsection{Proof of Theorem \ref{thm:1}}
Let $n\ge 3$ be an odd number. Now we characterize the $\mathcal{P}$-positions of $n$-dimensional Wythoff's game with move vectors \[V=\{(1,0,\dots,0),(0,1,\dots,0),\dots,(0,0,\dots,1),(1,1,\dots,1)\}.\] Recall that $\oplus$ denotes bitwise exclusive OR. We restate the result as below.
\begin{theorem1}
	The set of all $\mathcal{P}$-positions of $n$-dimensional Wythoff's game with move vectors $V$ is 
	$
	P^{(n)}=\{(x_1,x_2,\dots,x_n )\in \mathbb{N}^n:x_1 \oplus x_2 \oplus \dots \oplus x_n=0\}
	$.
\end{theorem1}

\begin{proof}
	Denote by $S^{(n)}$ the set of all possible positions of $n$-dimensional Wythoff's game with move vectors $V$. By Lemma \ref{2.1.4}, we only need to show that $P^{(n)}$ satisfies the conditions (i) and (ii) in Lemma \ref{2.1.4}. Namely, every position in $P^{(n)}$ will be changed into a position in $S^{(n)}$ by any legal move, and every position in $S^{(n)}\backslash P^{(n)}$ can be changed into a position in $P^{(n)}$ by some legal move. 
	
	(i) Let $x=(x_1,x_2,\dots,x_n )\in P^{(n) }$. We need to show that $x-tv\in S^{(n)}\backslash P^{(n)}$ for all $t\ge 1$ and $v\in V$ providing $x-tv\in\mathbb{N}^N$. 

	When $v=(1,1,\dots,1)$, we have $x-tv=(x_1-t,x_2-t,\dots,x_n-t)$. Consider the binary representation $t= \sum_{k=0}^{\infty}2^kf_k(t)$ where $f_k (t)=0$ or $1$. Let $\tau =\min\{k\geq 0:f_k(t)=1\} $. It follows that $t\equiv 2^\tau \pmod{ 2^{\tau +1}}$. For any component $x_i =\sum_{k=0}^{\infty}2^kf_k(x_i)$ of $x$, since $x_i-t\ge 0$, we have
	\begin{align}
	f_\tau (x_i )=1 
	\iff  & \sum_{k=0}^{\tau}2^kf_{k}(x_i) \geq 2^\tau \nonumber\\
	\iff  & \sum_{k=0}^{\tau}2^kf_{k}(x_i-t) < 2^\tau \nonumber\\
	\iff  & f_\tau (x_i-t)=0.\label{3-1}
	\end{align}
	Since both $f_\tau (x_i )$ and $f_\tau (x_i-t)$ can only be 0 or 1, Eq.~\eqref{3-1} actually shows that $f_\tau (x_i )\oplus f_\tau (x_i-t)=1$. By the definition of $P^{(n)}$, we see $x_1\oplus x_2\oplus \dots \oplus x_n=0$. It follows from Lemma \ref{lem:7} that for all $k\ge 0$, 
	\begin{equation}\label{3-1-1}
		f_k (x_1 )\oplus f_k (x_2 )\oplus\dots\oplus f_k (x_n )=0.
	\end{equation}
	Note that $n\ge 3$ is an odd number. By Eq.~\eqref{3-1} and Eq.~\eqref{3-1-1},  we have
	\[f_\tau (x_1-t)\oplus f_\tau (x_2-t)\oplus \dots \oplus f_\tau (x_n-t)=1. \]
	Therefore, 
	\[(x_1-t)\oplus(x_2-t)\oplus \dots \oplus(x_n-t)\ne 0\]
	which means $x-tv\notin P^{(n)}$.
	
	When $v = (1,0,\dots,0)$, we have $x-tv=(x_1-t,x_2,\dots,x_n)$. Since $x\in P^{(n)}$, for all $k\ge 0$, 
	\begin{equation}\label{3-2}
	f_k (x_1 )\oplus f_k (x_2 )\oplus \dots \oplus f_k (x_n )=0.
	\end{equation} 
	It follows from Eq.~\eqref{3-1} that 
	\begin{equation}\label{3-3}
	f_\tau (x_1 )\oplus f_\tau (x_1-t)=1.
	\end{equation}
	By Eq.~\eqref{3-2} and Eq.~\eqref{3-3}, we see $f_\tau (x_1-t)\oplus f_\tau(x_2 )\oplus \dots \oplus f_\tau (x_n )=1$ and	
	\[(x_1-t)\oplus x_2 \oplus \dots \oplus x_n\ne 0. \]
	So $x-tv\notin P^{(n)}$.

	For the other cases $v\in\{(0,1,\dots,0),\dots,(0,\dots,0,1)\}$, we also see $x-tv\notin P^{(n)}$ by applying the same discussion as in the case $v=(1,0,\dots,0)$. 
	
	(ii) Suppose $x=(x_1,x_2,\dots ,x_n )\in S^{(n)}\backslash P^{(n)}$. 
	By the definition of $P^{(n)}$, we know that $ x_1\oplus x_2\oplus \dots \oplus x_3\ne 0$. Let
	\[k' = \max \{ k\geq 0: f_k(x_1 )\oplus f_k (x_2 )\oplus \dots \oplus f_k (x_n )=1\}.\]
	Since $f_k (x_i )=0$ or $1$, there exists an $i$ such that $f_{k'} (x_i )=1$. Without loss of generality, we suppose $i=1$. Let 
	\[b_k=
	\begin{cases}
	f_k(x_1) &\text{if }k>k',\\
	f_k(x_2)\oplus f_k(x_3)\oplus \dots \oplus f_k(x_n) &\text{if }k\leq k'.
	\end{cases}\]
	Obviously, $b_k=0$ or $1$ for all $k\geq 0$. Set $\widetilde{x}_1=\sum_{k=0}^{\infty}2^kb_k$. Then
	\[b_k\oplus f_k (x_2 )\oplus \dots \oplus f_k (x_n )=0,
	\]
	which implies
	\[\widetilde{x}_1\oplus x_2\oplus \dots \oplus x_n=0.\]
	Note that $f_{k'}(x_1 )=1$ and $f_{k'}(x_1)\oplus b_{k'}=1$. We have $b_{k'}=0$. Thus
	\begin{align*}
		\widetilde{x}_1 & = \sum_{k=0}^{k'-1}2^k b_k +\sum_{k=k'+1}^{\infty}2^kb_k\\
		& = \sum_{k=0}^{k'-1}2^k b_k +\sum_{k=k'+1}^{\infty}2^kf_k(x_1)\\
		& \leq 2^{k'}f_{k'}(x_1) + \sum_{k=k'+1}^{\infty}2^kf_k(x_1) \leq x_1.
	\end{align*}
	Letting $t=x_1-\widetilde{x}_1$ and $v=(1,0,\dots ,0)$, we obtain that  $x-tv=(\widetilde{x}_1,x_2,\dots,x_n)\in P^{(n)}$.
\end{proof}

\section{\texorpdfstring{$n$}{n}-Dimensional Discrete Sierpinski Sponge}
In this section, we investigate the structure of the set \[	P^{(n)}:=\{(x_1,x_2,\dots,x_n )\in \mathbb{N}^n:x_1 \oplus x_2 \oplus \dots \oplus x_n=0\}\] and find the relationship between $P^{(n)}$ and the Sierpinski sponge. Recall that for all $m\ge 0$, 
\[P_m^{(n)}=\left\{(x_1,x_2,\dots,x_n )\in \mathbb{N}^n : x_i<2^m \text{ for }1\leq i\leq n \text{ and }x_1\oplus x_2\oplus\dots\oplus x_n =0 \right\}.\]
We call the set $P_m^{(n)}$ an ($n$-dimensional) \emph{discrete Sierpinski sponge}. Obviously, $P_m^{(n)}$ is a subset of $P^{(n)}$ and $P^{(n)} = \bigcup_{m=0}^\infty P_m^{(n)}$. So in the following, we shall find a way to obtain the Sierpinski sponge from the $n$-dimensional discrete Sierpinski sponge $P_m^{(n)}$. 

To study the structure of $P^{(n)}$, we need an auxiliary lemma on the bitwise exclusive OR $\oplus$. Recall that for $t\in\mathbb{N}$, its binary representation is denoted by \[t=\sum_{k=0}^{\infty}2^mf_{k}(t)\] where $f_{k}(t)=0$ or $1$ for all $k\ge 0$.
\begin{lemma}\label{lem:7}
	Let $\ell\ge 2$ be an integer and $(x_1,x_2,\dots,x_{\ell})\in \mathbb{N}^{\ell}$. Then 
	\begin{enumerate}
		\item[(i)] $f_k(x_1\oplus x_2\oplus \dots\oplus x_{\ell}) = f_{k}(x_1)\oplus f_{k}(x_2)\oplus\dots\oplus f_{k}(x_{\ell})$ for all $k\ge 0$;
		\item[(ii)] $x_1\oplus x_2\oplus \dots\oplus x_{\ell}=0$ if and only if $f_{k}(x_1)\oplus f_{k}(x_2)\oplus\dots\oplus f_{k}(x_{\ell})=0$ for all $k\ge 0$.  
	\end{enumerate} 
\end{lemma}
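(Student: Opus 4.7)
The plan is to prove (i) by induction on $\ell$ and then deduce (ii) as a direct corollary using the fact that a non-negative integer is zero exactly when all its binary digits vanish.

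For the base case $\ell = 2$ of (i), I would unwind the definition of the bitwise exclusive OR. By definition, for $x_1, x_2 \in \mathbb{N}$ the number $x_1 \oplus x_2$ is the unique non-negative integer whose $k$-th binary digit is the XOR of the $k$-th digits of $x_1$ and $x_2$, that is, $f_k(x_1 \oplus x_2) = f_k(x_1) \oplus f_k(x_2)$ for every $k \ge 0$. (If one wishes to avoid taking this as a definition, one can verify it by noting that $f_k(x_1) \oplus f_k(x_2) = f_k(x_1) + f_k(x_2) \pmod{2}$, so the operation is carry-free.) This is the only substantive content in the lemma.

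For the inductive step, assume the claim holds for $\ell - 1$ arguments. Since $\oplus$ is associative, I would write
\[
x_1 \oplus x_2 \oplus \dots \oplus x_\ell = (x_1 \oplus \dots \oplus x_{\ell-1}) \oplus x_\ell,
\]
apply the base case with $y = x_1 \oplus \dots \oplus x_{\ell-1}$ and $x_\ell$ to get
\[
f_k(x_1 \oplus \dots \oplus x_\ell) = f_k(x_1 \oplus \dots \oplus x_{\ell-1}) \oplus f_k(x_\ell),
\]
and then invoke the induction hypothesis on the first term. This gives (i).

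For (ii), both directions reduce to (i). If $x_1 \oplus \dots \oplus x_\ell = 0$, then $f_k(x_1 \oplus \dots \oplus x_\ell) = f_k(0) = 0$ for all $k$, and (i) converts this immediately to $f_k(x_1) \oplus \dots \oplus f_k(x_\ell) = 0$. Conversely, if the bitwise XORs all vanish, then by (i) every binary digit of $x_1 \oplus \dots \oplus x_\ell$ is zero, and since a non-negative integer with all zero binary digits must equal $0$, we conclude $x_1 \oplus \dots \oplus x_\ell = 0$.

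There is no real obstacle here: the entire lemma is essentially a formal restatement of the definition of bitwise XOR, and the induction is routine bookkeeping. The only point worth being careful about is making the base case airtight — once the bitwise definition of $\oplus$ is in hand, associativity and induction finish the argument with nothing further to check.
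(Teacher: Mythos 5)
Your proposal is correct and follows essentially the same route as the paper: part (i) by induction on $\ell$ with the $\ell=2$ case taken from the definition of $\oplus$, and part (ii) deduced from (i) together with the observation that an integer vanishes exactly when all its binary digits do. The paper phrases (ii) via the $\ell=2$ case and the auxiliary variable $y=x_1\oplus\dots\oplus x_{\ell-1}$, but this is only a cosmetic difference from your derivation.
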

\begin{proof}
	(i) When $\ell=2$, the result follows from the definition of $\oplus$. In general, set $y=x_1\oplus x_{2}\oplus \dots\oplus x_{\ell-1}$. Noting that 
	\begin{align*}
		f_k(x_1\oplus x_2\oplus \dots\oplus x_{\ell}) & = f_k(y\oplus x_{\ell}) = f_k(y)\oplus f_{k}(x_{\ell}),
	\end{align*}
	the result follows from induction on $\ell$.

	(ii) When $\ell=2$, the definition of $\oplus$ gives 
	\begin{align*}
		x_1\oplus x_2=0 \iff & f_{k}(x_1)=f_{k}(x_2) \text{ for all }k\ge 0,\\
		\iff & f_{k}(x_1)\oplus f_{k}(x_2)=0 \text{ for all }k\ge 0.
	\end{align*}
	In general, set $y=x_1\oplus x_{2}\oplus \dots\oplus x_{\ell-1}$. Then $y\oplus x_{\ell}=0$ if and only if $f_{k}(y)\oplus f_{k}(x_{\ell})=0$ for all $k\ge 0$. Now use Lemma \ref{lem:7}(i), and the result follows.
\end{proof}

The next result shows the nested structure of $P_m^{(n)}$. 
\begin{lemma}\label{3.3.3n}
	For all $m\geq 0$, 
	\begin{equation}\label{3-4n}
	P_{m+1}^{(n)}=\bigcup_{v\in T^{(n)}} (2^m v+P_m^{(n)})
	\end{equation}
	where $T^{(n)}=\{(x_1,x_2,\dots,x_n)\in \{0,1\}^n :x_1\oplus x_2\oplus\dots\oplus x_n =0 \}$.
\end{lemma}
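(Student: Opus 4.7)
The plan is to reduce the equality to a bit-by-bit statement via Lemma \ref{lem:7}, exploiting the unique decomposition of any integer $y < 2^{m+1}$ as $y = 2^m b + r$ with $b \in \{0,1\}$ and $r < 2^m$. In terms of the binary digits $f_k$, the bit $b$ equals $f_m(y)$ and $r = \sum_{k=0}^{m-1} 2^k f_k(y)$, so on the level of positions, every $y = (y_1,\dots,y_n) \in \mathbb{N}^n$ with all $y_i < 2^{m+1}$ decomposes uniquely as $y = 2^m b + r$ with $b \in \{0,1\}^n$ and $r \in \{0,\dots,2^m-1\}^n$.

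For the inclusion $\supseteq$, I would take $v \in T^{(n)}$ and $x \in P_m^{(n)}$ and verify $2^m v + x \in P_{m+1}^{(n)}$. The bound $x_i + 2^m v_i < 2^m + 2^m = 2^{m+1}$ is immediate from $v_i \le 1$ and $x_i < 2^m$. For the XOR condition, note that since $x_i < 2^m$ we have $f_k(2^m v_i + x_i) = f_k(x_i)$ for $k < m$ and $f_m(2^m v_i + x_i) = v_i$. Applying Lemma \ref{lem:7}(i) bit-by-bit, the XOR of the components vanishes in every bit position (using $v \in T^{(n)}$ for bit $m$ and $x \in P_m^{(n)}$ for the lower bits), and Lemma \ref{lem:7}(ii) gives the desired XOR condition.

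For the inclusion $\subseteq$, I would take $y \in P_{m+1}^{(n)}$ and write $y_i = 2^m b_i + r_i$ as above, so $b = (b_1,\dots,b_n) \in \{0,1\}^n$ and $r = (r_1,\dots,r_n)$ with $r_i < 2^m$. Again $f_k(y_i) = f_k(r_i)$ for $k < m$ and $f_m(y_i) = b_i$. Applying Lemma \ref{lem:7}(ii) to $y$, the XOR vanishes in each bit; restricting to bit $m$ yields $b_1 \oplus \cdots \oplus b_n = 0$, i.e.\ $b \in T^{(n)}$, while restricting to bits $k < m$ and reapplying Lemma \ref{lem:7}(ii) yields $r_1 \oplus \cdots \oplus r_n = 0$, i.e.\ $r \in P_m^{(n)}$. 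Then $y = 2^m b + r$ displays $y$ as an element of the right-hand side.

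Neither direction hides a real obstacle: the whole argument is essentially the observation that the XOR condition splits along the bit at position $m$ versus the lower bits, which is exactly what Lemma \ref{lem:7} packages. The only point requiring a bit of care is keeping the two clauses of $f_k(y_i)$ (below $m$ versus at $m$) straight, so I would state the decomposition $y_i = 2^m b_i + r_i$ explicitly at the start and then invoke Lemma \ref{lem:7} in both directions.
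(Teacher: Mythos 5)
Your proposal is correct and follows essentially the same route as the paper: both proofs split a position in $P_{m+1}^{(n)}$ along the bit at position $m$ versus the lower bits, identify the top bits with an element of $T^{(n)}$ and the remainder with an element of $P_m^{(n)}$, and use Lemma \ref{lem:7} to pass between the integer XOR condition and the bitwise one. The only cosmetic difference is that the paper phrases the forward inclusion as subtracting $2^m v'$ from $x$ rather than exhibiting the decomposition $y_i = 2^m b_i + r_i$ up front.
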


\begin{proof}
	Let $x=(x_1,x_2,\dots,x_n)\in P_{m+1}^{(n)}$. Then by Lemma \ref{lem:7}(ii), we have for all $k\ge 0$, $f_k(x_1)\oplus f_k(x_2)\oplus \dots \oplus f_k(x_n) = 0$, which yields that 
	\[v':=(f_m(x_1), f_m(x_2), \dots , f_m(x_n)) \in T^{(n)}.\]
	Let $\widetilde{x} = (\widetilde{x}_1,\widetilde{x}_2,\dots,\widetilde{x}_n) = x-2^mv'$. It is clear that 
	\[f_m (\widetilde{x}_1) = f_m (\widetilde{x}_2) = \dots =f_m (\widetilde{x}_n) = 0\]
	and for $0\leq k<m$, 
	\begin{equation}\label{eq:6}
		f_k (\widetilde{x}_1)\oplus f_k (\widetilde{x}_2)\oplus \dots \oplus f_k (\widetilde{x}_n) = 0.
	\end{equation}
	Note that $\widetilde{x}_i = \sum_{k=1}^{m-1}2^kf_k(\widetilde{x}_i) < 2^m$ for all $i$. By Lemma \ref{lem:7}(ii), Eq.~\eqref{eq:6} implies $\widetilde{x}_1\oplus \widetilde{x}_2\oplus\dots\oplus\widetilde{x}_n=0$. Hence $\widetilde{x}\in P_{m}^{(n)}$. In other words, $x\in (2^m v' + P_{m}^{(n)})$. This shows $P_{m+1}^{(n)}\subset\bigcup_{v\in T^{(n)}} (2^m v+P_m^{(n)})$.
	
	Let $v''\in T^{(n)}$ and $x=(x_1,x_2,\dots,x_n)\in (2^m v'' + P_{m}^{(n)})$. Then there exists $y\in P_m^{(n)}$ such that $x_i=2^mv''_i+y_i$ for all $i$. Since $y_i<2^m$, we have $v_i''=f_k(x_i)$ for all $i$. Namely,
	\[v'' = (f_m(x_1), f_m(x_2), \dots , f_m(x_n)).\]
	Therefore, 
	\begin{equation}\label{eq:7}
		f_m(x_1)\oplus f_m(x_2)\oplus \dots \oplus f_m(x_n) = 0.
	\end{equation}
	Since $f_k(x_i)=f_{k}(y_i)$ for $i=1,\dots,n$, we have 
	\begin{equation}\label{eq:8}
		f_k(x_1)\oplus f_k(x_2)\oplus \dots \oplus f_k(x_n) = 0
	\end{equation}
	for $0\leq k<m$.
	Note that $\widetilde{x}_i = \sum_{k=1}^{m}2^kf_k(\widetilde{x}_i) < 2^{m+1}$. By Lemma \ref{lem:7}(ii), Eq.~\eqref{eq:7} and Eq.~\eqref{eq:8} yield $x\in P_{m+1}^{(n)}$. This implies $P_{m+1}^{(n)}\supset\bigcup_{v\in T^{(n)}} (2^m v+P_m^{(n)})$ and the desired result follows.
\end{proof}

In fact, we can see that for all $v,u\in T^{(n)}$ with $v \ne u$, 
\[(2^m v+P_m^{(n)})\cap(2^m u+P_m^{(n)}) = \emptyset\]
and 
\[\mathrm{Card}\left(T^{(n)}\right) = \sum_{i=0}^{(n-1)/2} \binom{n}{2i} = 2^{n-1}. \]
Thus Lemma \ref{3.3.3n} actually shows that $P_{m+1}^{(n)}$ is composed by $2^{n-1}$ translated copies of $P_m^{(n)}$; see also Fig. \ref{fig.1}. 

\subsection{Proof of Theorem \ref{thm:2}}
To obtain a bounded version of $P^{(n)}$, we consider the scaled copy $2^{-m}P^{(n)}_m$. According to Lemma \ref{lem:7}, the set $P^{n}_m$ has an equivalent definition as follows:
\begin{align}
	P_m^{(n)}=\Big\{(x_1,x_2,\dots,x_n)\in\mathbb{N}^n\,:\ & x_i<2^m \text{ for }1\leq i\leq n \text{ and } \nonumber\\
	& f_{k}(x_1)\oplus f_{k}(x_2)\oplus\dots\oplus f_{k}(x_n) =0 \text{ for all } k\ge 0\Big\}. \label{eq:9}
\end{align}
For any $y\in (1/2^m )P_m^{(n)}$, there exists $x\in P_m^{(n)}$ such that $y_i=(1/2^m)x_i$ for $i=1,2,\dots,n$. Let $x_i=\sum_{k=0}^{m}2^kf_{k}(x)$. Then 
\begin{align*}
y_i & =(1/2^m)x_i = (1/2^m )\sum_{k=0}^{m-1}2^kf_k(x_i)
= \sum_{k=0}^{m-1}2^{k-m}f_{k}(x_i) = \sum_{k=1}^{m}2^{-k}f_{m-k}(x_i).
\end{align*}
Since $x\in P_m^{(n)}$, from Eq.~\eqref{eq:9}, we obtain that for $1\leq k\leq m$,
\begin{align*}
	0 & = f_{m-k}(x_1)\oplus f_{m-k}(x_2)\oplus\dots\oplus f_{m-k}(x_n) \\
	  & = f_{m-k}(2^{m}y_1)\oplus f_{m-k}(2^{m}y_2)\oplus\dots\oplus f_{m-k}(2^{m}y_n).
\end{align*}
Now we have for $m\ge 1$, 
\begin{align*}
	(1/2^m )P_m^{(n)}=\Big\{y=(y_1,y_2,\dots,y_n) \in [0,1)^n\,:\ & 2^my\in\mathbb{N}^n \text{ and for }0\leq k\leq m-1,\\
	& f_{k}(2^{m}y_1)\oplus f_{k}(2^{m}y_2)\oplus\dots\oplus f_{k}(2^{m}y_n) =0\Big\}.
\end{align*}
Apparently, $(1/2^0)P_0^{(n)}=\{(0,0,\dots,0)\}\subset (1/2)P_{1}^{(n)}$. Moreover, from the above, we can see that $(1/2^m)P_m^{(n)}\subset (1/2^{m+1})P_{m+1}^{(n)}$ for all $m\ge 1$. Then \[\mathcal{P}^{(n)}=\lim\limits_{m\to\infty}(1/2^m )P_m^{(n)}=\bigcup_{m=0}^\infty(1/2^m)P_m^{(n)}.\]
We obtain a bounded set which does not depend on the scale $m$. Further, 
\begin{align*}
	\mathcal{P}^{(n)}=\Bigg\{x=(x_1,x_2,\dots,x_n)\in [0,1)^n\,:\ & \exists\, m \text{ s.t. } 2^mx\in\mathbb{N}^n\text{ and for }0\leq k \leq m-1 \\
	& f_{k}(2^{m}x_1)\oplus f_{k}(2^{m}x_2)\oplus\dots\oplus f_{k}(2^{m}x_n) =0\Bigg\}.
\end{align*}

Now we are able to prove our second main result. 
\begin{theorem2}
	For every odd number $n\ge 3$, 
	\begin{equation}\label{3-6n}
		\mathcal{P}^{(n)}=\bigcup_{v\in T^{(n)}} \frac{1}{2} (v+\mathcal{P}^{(n)})
	\end{equation}
	where $T^{(n)}=\{(x_1,x_2,\dots,x_n)\in \{0,1\}^n :x_1\oplus x_2\oplus\dots\oplus x_n =0 \}$.
\end{theorem2}
\begin{proof}
	Since $\mathcal{P}^{(n)}=\bigcup_{m=1}^\infty(1/2^m)P_m^{(n)}$, by Lemma \ref{3.3.3n} we have
	\begin{align*}
		\mathcal{P}^{(n)} =& \bigcup_{m=1}^\infty \left( \frac{1}{2^m} \bigcup_{v\in T^{(n)}}(2^{m-1}v+P_{m-1}^{(n)}) \right) \\
		=& \bigcup_{m=1}^\infty   \bigcup_{v\in T^{(n)}}\left(\frac{1}{2}v+ \frac{1}{2^m}P_{m-1}^{(n)}\right) \\
		=& \bigcup_{v\in T^{(n)}}  \bigcup_{m=1}^\infty \left(\frac{1}{2}v+ \frac{1}{2^m}P_{m-1}^{(n)} \right) \\
		=& \bigcup_{v\in T^{(n)}} \frac{1}{2} \left(v+ \bigcup_{m=1}^\infty ( 1/2^{m-1})P_{m-1}^{(n)} \right)\\
		=& \bigcup_{v\in T^{(n)}} \frac{1}{2} (v+\mathcal{P}^{(n)})
	\end{align*}
	which is Eq.~\eqref{3-6n}.
\end{proof}

When $n=3$, Eq.\eqref{3-6n} actually means
\[\mathcal{P}^{(n)} = \frac{1}{2}\mathcal{P}^{(n)} \cup \frac{1}{2}((1,1,0)+\mathcal{P}^{(n)}) \cup \frac{1}{2}((1,0,1)+\mathcal{P}^{(n)}) \cup \frac{1}{2}((0,1,1)+\mathcal{P}^{(n)})\]
which has the same self-similarity as the Sierpinski sponge. $\mathcal{P}^{(n)}$ obviously contains only rational points and is not necessarily compact. However, the closure of $P^{(n)}$ (in $\mathbb{R}^n$) is compact. Thus $\overline{\mathcal{P}^{(n)}}$ is the attractor of the iteration function system $\{g_v(x):=\frac{1}{2}(v+x)\}_{v\in T^{(n)}}$. 

\medskip
We end the section by giving a description of $\overline{\mathcal{P}^{(n)}}$.

\begin{proposition}\label{3.3.7n}
	Let $n\ge 3$ be an odd number. Then $\overline{\mathcal{P}^{(n)}}=\mathcal{Q}^{(n)}$ where 
	\begin{multline}\label{3-7n}
		\mathcal{Q}^{(n)}=\Bigg\{x\in [0,1]^n : 
		\text{there exist $0$-$1$ sequences } (a_{k}(x_i))_{k\geq 1}\text{ such that }\\
		x_i = \sum_{k=1}^{\infty}a_{k}(x_i)2^{-k}
		\text{ and }a_{k}(x_1)\oplus a_{k}(x_2)\oplus \dots \oplus a_{k}(x_n)=0 \text{ for all }k \Bigg\}.
	\end{multline}
\end{proposition}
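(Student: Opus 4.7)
The plan is to prove both inclusions $\overline{\mathcal{P}^{(n)}}\subset\mathcal{Q}^{(n)}$ and $\mathcal{Q}^{(n)}\subset\overline{\mathcal{P}^{(n)}}$ directly from the explicit binary description of $\mathcal{P}^{(n)}$ established just before the statement.

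For $\overline{\mathcal{P}^{(n)}}\subset\mathcal{Q}^{(n)}$ I would first observe the trivial inclusion $\mathcal{P}^{(n)}\subset\mathcal{Q}^{(n)}$: if $x\in(1/2^m)P_m^{(n)}$ then setting $a_k(x_i)=f_{m-k}(2^{m}x_i)$ for $1\le k\le m$ and $a_k(x_i)=0$ for $k>m$ gives a $0$-$1$ expansion of $x_i$ whose $k$-th digits $\oplus$-sum to $0$. Then I would show $\mathcal{Q}^{(n)}$ is closed in $[0,1]^n$. Given $x^{(j)}\to x$ with $x^{(j)}\in\mathcal{Q}^{(n)}$, each $x^{(j)}$ carries expansions $(a_k(x_i^{(j)}))_{k\ge 1}\in\{0,1\}^{\mathbb{N}}$ satisfying the XOR condition. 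Since $\{0,1\}^{\mathbb{N}}$ is compact in the product topology, a diagonal/Tychonoff extraction yields a subsequence along which $a_k(x_i^{(j_\ell)})\to a_k^\ast\in\{0,1\}$ for every $k$ and every $i$. Passing to the limit in the pointwise XOR condition preserves it, and dominated convergence (the digits are bounded by $1$, the weights $2^{-k}$ are summable) gives $\sum_{k}a_k^\ast 2^{-k}=\lim_\ell x_i^{(j_\ell)}=x_i$. Hence $x\in\mathcal{Q}^{(n)}$.

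For $\mathcal{Q}^{(n)}\subset\overline{\mathcal{P}^{(n)}}$ I would exhibit, for any $x\in\mathcal{Q}^{(n)}$, an explicit sequence in $\mathcal{P}^{(n)}$ converging to $x$. Choose a representation $(a_k(x_i))_{k\ge 1}$ for $x$ as in \eqref{3-7n} and for each $m\ge 1$ set
\[
x^{(m)}_i=\sum_{k=1}^{m}a_k(x_i)2^{-k}.
\]
Then $2^m x_i^{(m)}=\sum_{j=0}^{m-1}a_{m-j}(x_i)2^{j}\in\mathbb{N}$, so the binary digit $f_j(2^m x_i^{(m)})$ equals $a_{m-j}(x_i)$ for $0\le j\le m-1$; the XOR condition on $(a_k(x_i))$ then translates into exactly the condition \eqref{eq:9} characterising $(1/2^m)P_m^{(n)}$. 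Thus $x^{(m)}\in\mathcal{P}^{(n)}$. Because $|x_i-x_i^{(m)}|\le 2^{-m}$, we have $x^{(m)}\to x$ and $x\in\overline{\mathcal{P}^{(n)}}$.

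The routine algebra is entirely bookkeeping with binary indexings; the only real subtlety is the compactness step, where binary expansions of dyadic rationals are not unique. The point is that $\mathcal{Q}^{(n)}$ is defined via the \emph{existence} of a good expansion, so I never need to identify ``the'' binary expansion of $x_i$: the diagonal limit of the chosen expansions of the $x^{(j)}$ automatically produces one good expansion for $x$, and dominated convergence ensures it really represents $x_i$. This is the main obstacle I expect, and it is handled cleanly by compactness of $\{0,1\}^{\mathbb{N}}$.
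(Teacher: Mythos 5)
Your argument is correct and follows essentially the same route as the paper: the inclusion $\mathcal{Q}^{(n)}\subset\overline{\mathcal{P}^{(n)}}$ via truncation of the chosen digit expansions, and closedness of $\mathcal{Q}^{(n)}$ via a diagonal extraction on the digit vectors in $\{0,1\}^n$ followed by passing the XOR condition to the limit. Your explicit use of dominated convergence to identify the limit expansion, and your remark on the non-uniqueness of dyadic expansions, merely spell out steps the paper's sequential-compactness argument leaves implicit.
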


\begin{proof}	
	Let $x=(x_1,x_2,\dots ,x_n )\in \mathcal{Q}^{(n)}$. For all $m\ge 1$, write  
	$x_i|_m =\sum_{k=1}^{m}2^{-k}a_{k}(x_i)$ and let
	$x|_m=(x_1|_m,x_2|_m,\dots ,x_n|_m)$.  
	Then $\{x|_m\}_{m\geq 1}\subset \mathcal{P}^{(n)}$ and it is clear that $ \big|x_i-x_i|_m\big|\to 0$ as $m\to \infty$. Thus $x$ is an accumulation point of $\mathcal{P}^{(n)}$. So $\mathcal{P}^{(n)}\subset\mathcal{Q}^{(n)}\subset \overline{\mathcal{P}^{(n)}}$.
	
	Now we show that $\mathcal{Q}^{(n)}$ is sequentially compact. Suppose $\{\mathbf{y}_j\}_{j\geq 1}$ is a sequence in $\mathcal{Q}^{(n)}$ where $\mathbf{y}_j = (y_{j,1},y_{j,2},\dots,y_{j,n})$. Suppose $y_{j,i} = \sum_{k=1}^{\infty}2^{-k}a_{k}(y_{j,i})$ with $a_{k}(y_{j,i})=0$ or $1$, and for all $j\ge 1$, $k\ge 1$, 
	\begin{equation}\label{eq:12}
		a_{k}(y_{j,1})\oplus a_{k}(y_{j,2})\oplus \dots \oplus a_{k}(y_{j,n})=0.
	\end{equation}
	Let $b_{k,j}:=\left(a_{k}(y_{j,1}),a_{k}(y_{j,2}),\dots,a_{k}(y_{j,n})\right)\in\{0,1\}^n$. Then $(b_{k,j})_{k\ge 1,j\ge 1}$ is a double sequence taking values in $\{0,1\}^n$. Since $\{0,1\}^n$ has only $2^n$ different values, there exist $\mathbf{c}_1=(c_{1,1},$ $c_{1,2},\dots,c_{1,n})\in\{0,1\}^n$ and an infinite subset $N_1\subset \mathbb{N}$ such that $b_{1,j}=\mathbf{c}_1$ for all $j\in N_1$. Write the first term of $(\mathbf{y}_{j})_{j\in N_1}$ by $\mathbf{z}_1$. For the same reason, there exist $\mathbf{c}_2\in\{0,1\}^n$ and an infinite subset $N_2\subset N_1$ such that $b_{2,j}=\mathbf{c}_2$ for all $j\in N_2$. Write the second term of $(\mathbf{y}_{j})_{j\in N_2}$ by $\mathbf{z}_2$. Repeating the previous procedure, we find a sequence $(\mathbf{c}_k)_{k\ge 1}$ on $\{0,1\}^n$ and a subsequence $(\mathbf{z}_k)_{k\ge 1}$ of $(\mathbf{y}_j)_{j\ge 1}$. Moreover, $(\mathbf{z}_k)_{k\ge 1}$ converges to $\mathbf{z}=(z_1,z_2,\dots,z_n)$ where $z_i=\sum_{k=1}^{\infty}c_{k,i}2^{-k}$. By Eq.~\eqref{eq:12}, we have for all $k\ge 1$, 
	\[c_{k,1}\oplus c_{k,2}\oplus \dots\oplus c_{k,n}=0.\]
	So $\mathbf{z}\in\mathcal{Q}^{(n)}$, which yields the desired compactness. Consequently, $\mathcal{Q}^{(n)}$ is closed. Given that $\mathcal{P}^{(n)}\subset\mathcal{Q}^{(n)}\subset \overline{\mathcal{P}^{(n)}}$, we obtain that $\mathcal{Q}^{(n)}$ is the closure of $\mathcal{P}^{(n)}$. 
\end{proof}

\section*{Acknowledgement}
    This work was supported by ``the Fundamental Research Funds for the Central Universities'' (No. 2020ZYGXZR041) and Guangdong Natural Science Foundation (Nos. 2018A030313971, 2018B0303110005).

\end{document}